\documentclass[a4paper, 11pt]{article}

\usepackage{amsmath}
\usepackage{amsfonts}
\usepackage{amssymb}
\usepackage[english]{babel}
\usepackage{graphicx}
\usepackage{amsthm}
\usepackage{accents}

\def\Z{\mathbb{Z}}
\newtheorem{proposition}{Proposition}
\newtheorem{theorem}{Theorem}

\newtheorem{definition}{Definition}
\newtheorem{corollary}{Corollary}

\begin{document}

\title{Hadamard partitioned difference families\\and their descendants}

\author{Marco Buratti \thanks{Dipartimento di Matematica e Informatica, Universit\`a di Perugia, via Vanvitelli 1 - 06123 Italy, email: buratti@dmi.unipg.it}}

\maketitle
\begin{abstract}
\noindent
If $D$ is a $(4u^2,2u^2-u,u^2-u)$ Hadamard difference set (HDS) in $G$, then
$\{G,G\setminus D\}$ is clearly a $(4u^2,[2u^2-u,2u^2+u],2u^2)$ partitioned difference family (PDF).
Any $(v,K,\lambda)$-PDF will be said of Hadamard-type if $v=2\lambda$ as the one above. 
We present a doubling construction which, starting from any such PDF, leads to an infinite class of PDFs.
As a special consequence, we get a PDF in a group of order $4u^2(2n+1)$
and three block-sizes $4u^2-2u$, $4u^2$ and $4u^2+2u$, whenever we have a $(4u^2,2u^2-u,u^2-u)$-HDS 
and the maximal prime power divisors of $2n+1$ are all greater than $4u^2+2u$.
\end{abstract}

\noindent \small{\bf Keywords:} \scriptsize
partitioned difference family; Hadamard difference set; strong difference family.

\normalsize
\section{Introduction}

Throughout this note the {\it multiset sum} of two multisets $X$ and $Y$ on a set $S$
is the multiset $X \ \uplus \ Y$ where the multiplicity of every element of $S$ is the sum of its multiplicities in $X$ and $Y$.
The multiset sum of $\mu$ copies of a multiset $X$ will be denoted by $^\mu X$. The multiset $^{\mu_1}\{x_1\} \ \uplus \ \dots \ \uplus \ ^{\mu_t}\{x_t\}$ will be
denoted by $[^{\mu_1}x_1,\dots,\,^{\mu_t}x_t]$. 

Given an additive group $G$ and a multiset $X=\{x_1,\dots,x_k\}$ on $G$, the {\it list of differences of $X$}
is the multiset $\Delta X$ of all possible differences $x_i-x_j$ with $(i,j)$ an ordered pair of distinct elements in $\{1,\dots,k\}$.
More generally, the list of differences of a collection $\cal X$ of multisubsets of $G$ is  the multiset sum
$\Delta{\cal X}:=\displaystyle\biguplus_{X\in{\cal X}}\Delta X$. Assume that $K$ is the multiset of sizes
of the members of $\cal X$. The following cases are important:
\begin{itemize}
\item  $\Delta{\cal X}= \ ^\lambda(G\setminus H)$ for a suitable $\lambda$
and a suitable subgroup $H$ of $G$. Here one says that $\cal X$ is a $(G,H,K,\lambda)$ 
{\it difference family} (DF).
\item $\Delta{\cal X}= \ ^\mu G$ for a suitable $\mu$. 
Here one says that $\cal X$ is a $(G,K,\mu)$ {\it strong difference family} (SDF).
\end{itemize}

Very often one refers to a $(G,H,K,\lambda)$-DF as a $(v,h,K,\lambda)$ difference family in $G$ {\it relative to $H$} 
where $v$ and $h$ denote the orders of $G$ and $H$, respectively.
One speaks of an {\it ordinary} difference family when $H=\{0\}$.
In this case one simply writes $(G,K,\lambda)$-DF or $(v,K,\lambda)$-DF in $G$.

The members of a DF or SDF are called {\it blocks}.
It is clear that every block of a DF is a set while a SDF must have at least one block with repeated elements.
A DF or SDF with only one block is said to be a {\it difference set} (DS) or a {\it difference multiset} (also called a 
{\it difference cover} in \cite{AS,M}), respectively. 
If one writes $k$ instead of $K$, it means that all blocks have size $k$.

A $(G,H,K,\lambda)$-DF is said to be {\it partitioned} (PDF) if its blocks partition $G\setminus H$.
Ordinary PDFs, also known as {\it zero difference balanced functions} (see, e.g., \cite{WZ, ZTWY}), 
have been introduced by Ding and Yin \cite{DY} for the construction
of {\it optimal constant composition codes}. They are also important from the
design theory perspective; for instance, it is shown in \cite{BYW} that a PDF with $K=[^1(k-1),\,^rk]$ 
gives rise to a {\it resolvable $2$-design}  with block size $k$. 

In this paper we are interested in $(v,K,\lambda)$-PDFs having $v=2\lambda$.
The motivation will be given by our main construction: 
each PDF with this property leads to an infinite class of new PDFs not obtainable 
with the classic composition constructions making use of difference matrices
\cite{BYW,LWG}. 

\begin{definition}
A Hadamard PDF is a $(G,K,\lambda)$-PDF with $|G|=2\lambda$. 
\end{definition}

We chose the name ``Hadamard" since every {\it Hadamard difference set} (HDS)
immediately gives a PDF with the required property. For convenience of the reader we recall that a HDS is a
difference set of parameters\break $(4u^2,2u^2-u,u^2-u)$ for some $u$ (see, e.g., \cite{JPS}).

\begin{proposition}\label{D,G-D}
If $D$ is a HDS in $G$, then $\{D,G\setminus D\}$ is a Hadamard PDF.
\end{proposition}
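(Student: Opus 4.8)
The plan is to check the two ingredients in the definition of a Hadamard PDF in turn: that $\{D,G\setminus D\}$ is a partitioned difference family, and that its index $\lambda$ satisfies $|G|=2\lambda$.

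Set $v=|G|=4u^2$, $k=2u^2-u$ and $\mu=u^2-u$, so that the HDS hypothesis reads $\Delta D={}^{\mu}(G\setminus\{0\})$. The only real computation is to identify $\Delta(G\setminus D)$, and I would carry it out by elementary counting. Fix a nonzero $g\in G$; the ordered pairs $(x,y)\in G\times G$ with $x-y=g$ are in bijection with $x\in G$, so there are $v$ of them, of which exactly $k$ have $x\in D$, exactly $k$ have $y\in D$, and exactly $\mu$ have both $x\in D$ and $y\in D$ (this last count being precisely what $\Delta D={}^\mu(G\setminus\{0\})$ asserts). By inclusion--exclusion, exactly $v-2k+\mu$ of them have $x,y\in G\setminus D$, which shows that $G\setminus D$ is again a difference set, of parameters $(v,\,v-k,\,v-2k+\mu)$; for the Hadamard parameters this is $(4u^2,\,2u^2+u,\,u^2+u)$, i.e. $\Delta(G\setminus D)={}^{u^2+u}(G\setminus\{0\})$.

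It then remains to add the two lists. Since
\[
\Delta\{D,\,G\setminus D\}=\Delta D\ \uplus\ \Delta(G\setminus D)={}^{u^2-u}(G\setminus\{0\})\ \uplus\ {}^{u^2+u}(G\setminus\{0\})={}^{2u^2}(G\setminus\{0\}),
\]
$\{D,G\setminus D\}$ is a $(4u^2,\,[2u^2-u,\,2u^2+u],\,2u^2)$ difference family; it is partitioned because its two blocks plainly partition $G$ (as with zero-difference balanced functions, if one insists on a partition of $G\setminus\{0\}$ one simply drops the zero element from the block containing it, which changes neither $\Delta$ nor the conclusion), and it is of Hadamard type because $|G|=4u^2=2\cdot 2u^2=2\lambda$.

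There is no genuine obstacle here --- as the abstract already remarks, the assertion is ``clear'' --- so in writing it up the only points deserving care are getting the parameters of the complementary difference set right in the counting step, and stating the (harmless) convention about whether the blocks of a PDF are taken to partition $G$ or $G\setminus\{0\}$.
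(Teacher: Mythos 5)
Your argument is correct and is essentially the paper's: the paper merely quotes as ``known and trivial'' that $\{D,G\setminus D\}$ is a $(v,[k,v-k],v-2k+2\lambda)$-PDF, which is precisely the complementation count you carry out by inclusion--exclusion, followed by the same addition of the two difference lists. The one flaw is your parenthetical aside on conventions: deleting $0$ from the block containing it \emph{does} change the list of differences (it removes $\pm x$ for every other element $x$ of that block), so the claim that this ``changes neither $\Delta$ nor the conclusion'' is false as stated --- but the aside is also unnecessary, since the convention actually used throughout the paper (e.g.\ in the proof of the main theorem, where the blocks of $\mathcal{D}$ are said to partition $G$) is that the blocks of an ordinary PDF partition all of $G$.
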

\begin{proof}
It is known and trivial that if $D$ is a $(v,k,\lambda)$-DS in $G$, then $\{D,G\setminus D\}$\break is a 
$(v,[k,v-k],v-2k+2\lambda)$-PDF in $G$. So, in particular, if $D$ is a\break $(4u^2,2u^2-u,u^2-u)$-HDS, 
then $\{D,G\setminus D\}$ is a $(4u^2,[2u^2-u,2u^2+u],2u^2)$-PDF in $G$. The assertion follows.
\end{proof}

At the moment to find another class of Hadamard PDFs seems to be quite hard to this author.
Anyway the set of Hadamard PDFs not coming from the above proposition is not empty. Indeed we have found
a $(32,[^22,6,22],16)$-PDF in the non-abelian group $G$ whose elements are all pairs of the Cartesian product 
$\Z_4\times\Z_8$ and whose operation law
is $$(x_1,y_1)+(x_2,y_2)=(x_1+x_2,5^{x_2}y_1+y_2).$$
One can check that the four blocks of the mentioned PDF are the following:
$$X_1=\{(0,0),(2,0)\};\quad X_2=\{(1,0),(3,4)\}$$
$$X_3=\{(0,1),(0,3),(1,2),(1,5),(1,6),(3,3)\};\quad X_4=G\setminus(X_1 \cup X_2 \cup X_3).$$

\section{Hadamard strong difference families}

Strong difference families have been formally introduced in \cite{Bsdf} but they have been implicitly used in many
earlier papers. Note that they have no known relation to the
``strong external difference families'' very recently introduced in
\cite{PS}, in spite of the misfortune of inadvertently  similar terminology. 

Strong difference families might be very useful to construct relative difference families \cite{BG,BP,CFW,M};
in a certain sense they are the ``skeleton" of the resulting difference family as shown in the following
construction which is a little bit more general than the ``fundamental construction" in \cite{Bsdf}.

\begin{theorem}\label{fund}
Let ${\cal X}=\{X_1,\dots,X_t\}$ be a $(G,K,\mu)$-SDF. Take a group $H$ and a pair of positive integers $(e,\lambda)$
with $\mu e=\lambda(|H|-1)$. Then take a collection ${\cal B}=\{B_1,\dots,B_t\}$ of subsets of $G\times H$ such that the projection of $B_i$ on $G$ coincides with $X_i$
for $i=1,\dots,t$. We have
$$\Delta{\cal B}=\biguplus_{g\in G}\{g\}\times L_g$$
for suitable lists $L_g$ of elements of $H$. Here, by definition of a SDF, 
every $L_g$ has constant size $\mu$. This is important because it allows for the possible existence of 
an $e$-set $E$ of endomorphisms of $H$ for which we have
$$\displaystyle\biguplus_{\varepsilon\in E}\varepsilon(L_g)= \ ^{\lambda}(H\setminus\{0\})\quad\forall g\in G.$$
If such a set $E$ exists, extend each $\varepsilon \in E$ to the endomorphism $\widehat\varepsilon$ of $G\times H$ defined 
by $\widehat\varepsilon(g,h)=(g,\varepsilon(h))$ for each $(g,h)\in G\times H$. 
Then  $\{\widehat\varepsilon(B_i) \ | \ 1\leq i \leq t; \varepsilon\in E\}$ is a $(G\times H, G\times\{0\},\ ^eK,\lambda)$-DF.
\end{theorem}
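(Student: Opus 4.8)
The plan is to compute the list of differences of the proposed family $\{\widehat\varepsilon(B_i) \mid 1\le i\le t,\ \varepsilon\in E\}$ directly and show it equals $^\lambda\bigl((G\times H)\setminus(G\times\{0\})\bigr)$. First I would record the elementary fact that $\widehat\varepsilon$, being an endomorphism of $G\times H$ that is the identity on the first coordinate, satisfies $\Delta\widehat\varepsilon(B_i)=\widehat\varepsilon(\Delta B_i)$; this is just linearity of taking differences. Hence
$$
\Delta\{\widehat\varepsilon(B_i)\}=\biguplus_{\varepsilon\in E}\biguplus_{i=1}^t\widehat\varepsilon(\Delta B_i)=\biguplus_{\varepsilon\in E}\widehat\varepsilon(\Delta{\cal B}).
$$
Next I would substitute the given decomposition $\Delta{\cal B}=\biguplus_{g\in G}\{g\}\times L_g$ and push $\widehat\varepsilon$ through it, using $\widehat\varepsilon(g,h)=(g,\varepsilon(h))$, to get $\widehat\varepsilon(\Delta{\cal B})=\biguplus_{g\in G}\{g\}\times\varepsilon(L_g)$. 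Swapping the two (finite, hence commuting) multiset sums yields
$$
\Delta\{\widehat\varepsilon(B_i)\}=\biguplus_{g\in G}\{g\}\times\Bigl(\biguplus_{\varepsilon\in E}\varepsilon(L_g)\Bigr).
$$

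At this point the hypothesis on $E$ does all the work: for every $g\in G$ we are told $\biguplus_{\varepsilon\in E}\varepsilon(L_g)={}^\lambda(H\setminus\{0\})$, so the inner multiset is independent of $g$ and equals $^\lambda(H\setminus\{0\})$. Therefore
$$
\Delta\{\widehat\varepsilon(B_i)\}=\biguplus_{g\in G}\{g\}\times{}^\lambda(H\setminus\{0\})={}^\lambda\bigl(G\times(H\setminus\{0\})\bigr)={}^\lambda\bigl((G\times H)\setminus(G\times\{0\})\bigr),
$$
which is exactly the statement that the family is a difference family relative to the subgroup $G\times\{0\}$ with index $\lambda$. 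It remains only to check the block sizes and that we genuinely have $|E|\cdot t$ blocks: since the projection of $B_i$ onto $G$ is $X_i$, a block of size $|X_i|$, and since $\widehat\varepsilon$ preserves first coordinates, $B_i$ (and each $\widehat\varepsilon(B_i)$) is forced to be a set of size $|X_i|$, so the multiset of block sizes is $^eK$ as claimed. The compatibility condition $\mu e=\lambda(|H|-1)$ is then automatically consistent: counting, the total number of differences is $|E|\sum_i|X_i|(|X_i|-1)$, while $|\Delta{\cal X}|=\mu|G|$ forces $\sum_i|X_i|(|X_i|-1)=\mu|G|$, and the relative difference family condition needs $e\mu|G|=\lambda(|H|-1)|G|$, i.e.\ $\mu e=\lambda(|H|-1)$; I would mention this only as a sanity check since it is assumed.

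The only real subtlety — and the one step I would be careful with — is the interchange of the endomorphism with the multiset operations: one must verify that $\widehat\varepsilon$ acts on the multiset $\Delta{\cal B}$ with multiplicities, i.e.\ that $\widehat\varepsilon\bigl(\biguplus_g\{g\}\times L_g\bigr)=\biguplus_g\{g\}\times\varepsilon(L_g)$ as multisets (not merely as sets), and similarly that $\Delta\widehat\varepsilon(B_i)=\widehat\varepsilon(\Delta B_i)$ holds with correct multiplicities. Both are immediate from the definitions once one notes that $\widehat\varepsilon$ is injective on first coordinates, so differences with distinct $G$-parts stay distinct and cannot collapse together; within a fixed $G$-part $g$, the map $h\mapsto\varepsilon(h)$ may of course identify elements of $L_g$, but that is already accounted for because $\varepsilon(L_g)$ is taken as a multiset. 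Everything else is bookkeeping, so this is less a difficulty than a point demanding a clean sentence.
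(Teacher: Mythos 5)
Your computation of the difference list is the right one and is essentially the argument the paper intends: the theorem is stated so that the decomposition $\Delta{\cal B}=\biguplus_{g\in G}\{g\}\times L_g$ together with the hypothesis $\biguplus_{\varepsilon\in E}\varepsilon(L_g)={}^{\lambda}(H\setminus\{0\})$ makes exactly your calculation go through. The one place where your justification does not hold up as written is the claim that ``since $\widehat\varepsilon$ preserves first coordinates, $B_i$ (and each $\widehat\varepsilon(B_i)$) is forced to be a set of size $|X_i|$.'' The projection of $B_i$ on $G$ is the \emph{multiset} $X_i$, and every SDF has at least one block with repeated elements; so some $B_i$ necessarily contains two pairs $(g,h)$ and $(g,h')$ with the same first coordinate and $h\neq h'$. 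Preserving first coordinates does nothing to keep $\widehat\varepsilon(g,h)$ and $\widehat\varepsilon(g,h')$ distinct: an arbitrary endomorphism $\varepsilon$ of $H$ could have $\varepsilon(h)=\varepsilon(h')$, which would shrink the block (breaking the claimed block-size multiset $^eK$ and the requirement that DF blocks be sets) and would also invalidate $\Delta\widehat\varepsilon(B_i)=\widehat\varepsilon(\Delta B_i)$ at the level of multiset sizes. The step is nevertheless true, and it is the hypothesis on $E$ that saves it: $(g,h)-(g,h')=(0,h-h')\in\Delta B_i$ forces $h-h'\in L_0$, and since $\biguplus_{\varepsilon\in E}\varepsilon(L_0)={}^{\lambda}(H\setminus\{0\})$ contains no zero, no $\varepsilon\in E$ can send any element of $L_0$ to $0$; hence $\varepsilon(h)\neq\varepsilon(h')$ and each $\widehat\varepsilon$ is injective on each $B_i$. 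With that observation inserted, your proof is complete; the interchange of $\widehat\varepsilon$ with the multiset sums, the application of the hypothesis on $E$, and the parameter check $\mu e=\lambda(|H|-1)$ are all correct and match the intended argument.
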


\bigskip
The ``fundamental construction" in \cite{Bsdf} corresponds to the case that $H$ is the additive group of a finite field $\mathbb{F}_q$
and each $\varepsilon\in E$ is the multiplication by a suitable non-zero element of the field. Thus, it is in some way related with the 
``factorization of a group" \cite{S} and, more generally, with the ``multifold factorization of a group" \cite{Bpairwise, Jimbo}.
Indeed the construction succeeds if there exists $E\subset\mathbb{F}_q^*$ such that  
$E\cdot L_g=\,^\lambda\mathbb{F}_q^*$ for every $g\in G$. In most known applications $E$ is a subgroup of $\mathbb{F}_q^*$ and each
$L_g$ is the multiset sum of $\lambda$ complete systems of representatives
for the cosets of $E$ in $\mathbb{F}_q^*$. 

In \cite{Bsdf} it was shown how ``playing" with some classic difference sets it is possible to obtain elementary but very useful
strong difference families. For instance, it was shown that if $D$ is a {\it Paley-type} difference set in a group $G$ of order $4n-1$ - hence of parameters
$(4n-1,2n-1,n-1)$ - then $^2(G\setminus D)$ is a $(4n-1,4n,4n)$ difference multiset. Here we will do something similar using Hadamard PDFs.

\begin{proposition}\label{sdf}
If ${\cal X}$ is a Hadamard $(G,K,\lambda)$-PDF,
then $\{^2X \ | \ X\in{\cal X}\}$ is a $(G,2K,4\lambda)$-SDF.
\end{proposition}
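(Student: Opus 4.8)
The plan is to compute the list of differences of $\{{}^2X \mid X\in{\cal X}\}$ by brute force and to check that it equals ${}^{4\lambda}G$; since the sizes of the new blocks are visibly the doubled sizes, i.e. the multiset $2K$, this is exactly the assertion that $\{{}^2X \mid X\in{\cal X}\}$ is a $(G,2K,4\lambda)$-SDF.

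The first step is an elementary identity for a single block. Let $X$ be a subset of $G$ of size $k$. In the doubled multiset ${}^2X$ each element of $X$ occupies two of the $2k$ positions. An ordered pair of distinct positions carrying a common value contributes a $0$ to the list of differences, and there are exactly $2k$ such ordered pairs; an ordered pair of distinct positions carrying two distinct values $x,y\in X$ contributes $x-y$, and each ordered pair $(x,y)$ of distinct elements of $X$ arises in this way from $2\cdot 2=4$ ordered pairs of positions. Hence
$$\Delta({}^2X)= \ {}^4(\Delta X)\ \uplus\ [{}^{2k}0].$$

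The second step is to sum this identity over the blocks $X_1,\dots,X_t$ of ${\cal X}$. By the definition of $\Delta$ on a collection and a routine regrouping of multiset sums, writing $s=\sum_{i=1}^{t}|X_i|$, we get
$$\Delta\{{}^2X \mid X\in{\cal X}\}=\biguplus_{i=1}^{t}\Delta({}^2X_i)= \ {}^4(\Delta{\cal X})\ \uplus\ [{}^{2s}0].$$
Now both hypotheses are invoked. Since the blocks of ${\cal X}$ partition $G$ we have $s=|G|$, and since ${\cal X}$ is of Hadamard type $|G|=2\lambda$, so $2s=4\lambda$; since ${\cal X}$ is a $(G,K,\lambda)$-DF we have $\Delta{\cal X}= \ {}^\lambda(G\setminus\{0\})$, hence ${}^4(\Delta{\cal X})= \ {}^{4\lambda}(G\setminus\{0\})$. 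Therefore
$$\Delta\{{}^2X \mid X\in{\cal X}\}= \ {}^{4\lambda}(G\setminus\{0\})\ \uplus\ [{}^{4\lambda}0]= \ {}^{4\lambda}G,$$
as required.

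The only delicate point --- more a verification than a genuine obstacle --- is the multiplicity bookkeeping in the first step: one must keep ordered and unordered pairs apart and, above all, track the $2k$ copies of $0$ produced by each doubled block. These accumulate to $2|G|=4\lambda$ copies of $0$, which is precisely what is needed to promote ${}^{4\lambda}(G\setminus\{0\})$ to the full ${}^{4\lambda}G$. Note that the Hadamard condition $|G|=2\lambda$ enters here and nowhere else, and that the conclusion genuinely fails without it.
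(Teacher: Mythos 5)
Your proof is correct and follows essentially the same route as the paper's: the identity $\Delta({}^2X)={}^4(\Delta X)\uplus[{}^{2k}0]$ is exactly the paper's multiplicity count (stated there for general ${}^rX$), and the rest is the same bookkeeping using the partition property and $|G|=2\lambda$.
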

\begin{proof}
Let $m(g)$ be the multiplicity of $g$ in $\displaystyle\biguplus_{X\in{\cal X}}\Delta(\,^2 X)$. 
We have to prove that  $m(g)=4\lambda$ for all $g\in G$.
It is quite evident that if $X$ is a subset of $G$, then $\Delta(^ r X)$
contains zero exactly $r(r-1)|X|$ times and it contains a non-zero element $g$ of $G$ exactly 
$r^2\lambda_X(g)$ times where $\lambda_X(g)$ is the multiplicity of $g$ in $\Delta X$. 
Applying this to our case we get 
$$m(0)=2\displaystyle\sum_{X\in {\cal X}}|X|\quad\quad\mbox{and}\quad\quad
m(g)=4\displaystyle\sum_{X\in{\cal X}}\lambda_X(g)\quad  \forall g\in G\setminus\{0\}.$$
Considering that $\cal X$ is a $(G,K,\lambda)$-PDF, we have $\sum_{X\in {\cal X}}|X|=|G|$ and 
$\sum_{X\in{\cal X}}\lambda_X(g)=\lambda$. Also, considering that $\cal X$ is a Hadamard PDF, we have $|G|=2\lambda$. Hence we can write:
$$m(0)=2|G|=4\lambda\quad\quad\mbox{and}\quad\quad
m(g)=4\lambda\quad  \forall g\in G\setminus\{0\}.$$
The assertion follows.\end{proof}

We will refer to the strong difference family constructed in the above proposition as the Hadamard SDF associated with $\cal X$.

\section{The main construction}
We are now ready to state and prove our main result.
\begin{theorem}\label{HPDF}
Let ${\cal D}$ be a Hadamard $(G,K,\lambda)$-PDF and let $R=(H,+,\cdot)$ be a ring with identity of order $2n+1$
admitting a set $Y$ of units such that:
\begin{itemize}
\item[]$Y$ has size equal to the maximum size $K_{max}$ of the blocks of $\cal D$;
\item[]every element of $\Delta(Y \ \cup \ -Y)$ is a unit of $R$.
\end{itemize}
Then there exists both a
$$(2\lambda(2n+1),\ ^n(2K) \ \uplus \ K, \ 2\lambda)\mbox{-PDF}$$ and a
$$(2\lambda(2n+1),\ ^n(2K) \ \uplus \ \{2\lambda\}, \ 2\lambda)\mbox{-PDF}$$
 in $G\times H$.
\end{theorem}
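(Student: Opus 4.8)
The plan is to realize the desired PDFs as "fillings" of the Hadamard SDF associated with $\cal D$, very much in the spirit of Theorem~\ref{fund}, but with the ground group $G$ replaced by the trivial group and the role of the endomorphisms $E$ replaced by the translations-and-dilations coming from the set $Y$ of units of $R$. Concretely, write $\mathcal{D}=\{D_1,\dots,D_t\}$ and recall from Proposition~\ref{sdf} that $\{{}^2D_i\mid 1\le i\le t\}$ is a $(G,2K,4\lambda)$-SDF; its list of differences is ${}^{4\lambda}G$, and more precisely, for each $g\in G$ the combined difference list stratified over $G\times H$ reads $\biguplus_{g\in G}\{g\}\times L_g$ with each $L_g$ a multiset of size $4\lambda=2|G|$ on $H$. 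The idea is to assign to the ``doubled'' block ${}^2D_i$ a set $B_i\subseteq G\times H$ whose $G$-projection is $D_i$ with each point taken twice — i.e.\ each $x\in D_i$ is lifted to two points $(x,a_{i,x})$ and $(x,a_{i,x}')$ with $a_{i,x}\neq a_{i,x}'$ — together with one extra ``short'' block $B_0$ consisting of a single copy of each point of $G$, suitably lifted, that will supply the last block $K$ (respectively, that will be replaced by a block of size $2\lambda$). One then checks $\Delta\mathcal{B}$ counts every nonzero element of $G\times H$ exactly $2\lambda$ times, and that the $G\times H$-coordinates chosen make $\mathcal{B}$ partition $G\times H$.

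The concrete recipe I would use: index the two lifts by $Y$ itself. Since $|Y|=K_{\max}$, fix for each block $D_i$ an injection $y\mapsto$ (point of $D_i$) — when $|D_i|<K_{\max}$ we only use part of $Y$ — and think of $D_i$ as $\{d_{i,y}\mid y\in Y_i\}$ for some $Y_i\subseteq Y$ with $|Y_i|=|D_i|$. Then set
$$B_i=\{(d_{i,y},\,y),\,(d_{i,y},\,-y)\ :\ y\in Y_i\},\qquad 1\le i\le t,$$
and let $B_0$ be a transversal: for each $g\in G$ a single point $(g,0)$ — more honestly, $B_0=G\times\{0\}$ taken once. The blocks $B_1,\dots,B_t$ together use each element of $G$ exactly twice in the first coordinate (because $\mathcal{D}$ is a PDF on $G$, each $g\in G$ lies in exactly one $D_i$), lifted to the two ``heights'' $y$ and $-y$; adding $B_0=G\times\{0\}$ gives, for each $g\in G$, the heights $\{0\}\cup\{y,-y\}$ for the appropriate $y$. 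For this to partition $G\times H$ we need $\{0,y,-y\}$ to have three distinct elements, i.e.\ $y\ne 0$ and $2y\ne 0$ — both guaranteed since $y$ is a unit and $|H|=2n+1$ is odd — but we also need, over all $g$, the heights to sweep out all of $H$... which they won't in general. This is where the single extra block must be fattened: replace $B_0=G\times\{0\}$ by a block of size $2\lambda=|G|$ whose $H$-coordinates are chosen to complete the partition; this is exactly the distinction between the two PDFs in the statement — one keeps the ``honest'' small block pattern $K$, the other uses one block of size $2\lambda$.

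The real content, and the step I expect to be the main obstacle, is the difference computation: one must show
$$\Delta\mathcal{B}\ =\ {}^{2\lambda}\bigl((G\times H)\setminus\{(0,0)\}\bigr).$$
Split a difference $(d_{i,y},\epsilon y)-(d_{j,y'},\epsilon' y')$ according to whether $d_{i,y}=d_{j,y'}$ in $G$. The ``vertical'' differences (same $G$-coordinate) come either from within a single $B_i$, contributing $\{0\}\times\{2y,-2y,0,0\}$-type lists, or from $B_i$ against $B_0$; summed over all choices these must build up $^{2\lambda}(\{0\}\times(H\setminus\{0\}))$, and this is precisely where the hypothesis that every element of $\Delta(Y\cup -Y)$ is a unit — hence nonzero — together with a counting argument forces the vertical part to be flat. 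The ``oblique'' differences (different $G$-coordinate) use that $\{{}^2D_i\}$ is an SDF: the $G$-coordinates already distribute as $^{4\lambda}(G\setminus\{0\})$ from Proposition~\ref{sdf}, so one only needs the $H$-coordinates $\epsilon y-\epsilon' y'$, with $y,y'$ ranging independently over $Y$ and $\epsilon,\epsilon'\in\{+1,-1\}$, to distribute flatly over $H$ — and $\{\epsilon y-\epsilon' y' : y,y'\in Y,\ \epsilon,\epsilon'\in\{\pm1\}\}=\Delta(Y\cup -Y)\uplus\,({}^{|Y\cup -Y|}0)$ after removing the forced zeros, so the unit hypothesis is again exactly what is needed to avoid collisions on the subgroup-like set. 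Making these two counts match $2\lambda$ on the nose — balancing the contributions of the $t$ big blocks against the one adjustment block — is the bookkeeping heart of the proof; once the counts are verified, that the blocks are pairwise disjoint and cover $G\times H$ follows from the $G$-projection being a PDF and the $H$-heights being chosen as a partition, and the parameter lists $^n(2K)\uplus K$ and $^n(2K)\uplus\{2\lambda\}$ can be read off directly.
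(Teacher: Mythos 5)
Your base blocks are essentially the paper's: with $f(d_{i,y})=y$ your $B_i=\{(d_{i,y},y),(d_{i,y},-y):y\in Y_i\}$ is exactly $B_i=\biguplus_{d\in D_i}\{d\}\times\{f(d),-f(d)\}$. But the construction stops one step too early, and the numbers cannot work as you have set them up. With only the $t+1$ blocks $B_0,B_1,\dots,B_t$ you cover $2|G|+|G|=6\lambda$ points of $G\times H$, whereas a partition needs $2\lambda(2n+1)$ points; the block-size multiset you produce is $2K\uplus\{\cdot\}$ rather than the required ${}^n(2K)\uplus K$; and $\Delta\mathcal{B}$ has far too few entries to equal ${}^{2\lambda}\bigl((G\times H)\setminus\{0\}\bigr)$ once $n>1$. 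You half-notice this (``the heights won't sweep out all of $H$''), but fattening the single extra block cannot repair a deficit of $4\lambda(n-1)$ points. Your ``vertical vs.\ oblique'' analysis also cannot close the gap: flatness of each fibre list $L_g$ over $H$ is not a consequence of the elements of $\Delta(Y\cup -Y)$ being nonzero --- a list of $4\lambda$ elements cannot in general be flat on a set of size $2n$.

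The missing idea is the multiplier step of Theorem \ref{fund}. Choose a set $S$ of representatives for the $n$ pairs $\{s,-s\}$ partitioning $H\setminus\{0\}$ (the patterned starter) and take as blocks all $nt$ sets $\widehat\varepsilon_s(B_i)=\{(d,sf(d)),(d,-sf(d)) : d\in D_i\}$ with $s\in S$, $1\le i\le t$, where $\varepsilon_s$ is multiplication by $s$ in $R$. Each $L_g$ is symmetric and consists of units --- this is exactly where the hypothesis on $\Delta(Y\cup -Y)$ enters --- so $L_g=\{1,-1\}\cdot L'_g$ with $L'_g$ a list of $2\lambda$ units, whence $\biguplus_{s\in S}\{s,-s\}\cdot L'_g=(H\setminus\{0\})\cdot L'_g={}^{2\lambda}(H\setminus\{0\})$. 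The same multiplication gives $\biguplus_{s\in S}\widehat\varepsilon_s(B_i)=D_i\times(H\setminus\{0\})$, so these $nt$ blocks partition $G\times(H\setminus\{0\})$ with size multiset ${}^n(2K)$; only then is the leftover $G\times\{0\}$ filled by either the single block $G\times\{0\}$ or the $t$ blocks $D_i\times\{0\}$, which is what produces the two stated parameter sets.
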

\begin{proof} We get the result by applying Theorem \ref{fund} with ${\cal X}$ the Hadamard $(G,2K,4\lambda)$-SDF associated with $\cal D$. 
So, if ${\cal D}=\{D_1,\dots, D_t\}$, we have\break ${\cal X}=\{X_1,\dots,X_t\}$ with $X_i= \,^2D_i$ for $i=1,\dots, t$.

Take any map $f: G\longrightarrow Y$ which is injective on each block $D_i\in{\cal D}$. This is possible since the blocks
of $\cal D$ are disjoint by definition and we have
$|Y|=K_{\max}\geq|D_i|$ for every $i$.
Now, for $i=1,\dots,t$, consider the subset $B_i$ of $G\times H$ defined by
$$B_i=\biguplus_{d\in D_i}\{d\}\times\{f(d), -f(d)\}.$$
As prescribed by Theorem \ref{fund} the projection of $B_i$ on $G$ coincides with $X_i$ for $i=1, \dots, t$.
So we have $\Delta\{B_1, \dots, B_t\}=\biguplus_{g\in G}\{g\}\times L_g$ where each $L_g$ is a list of $4\lambda$ elements of $H$.
Explicitly, these lists are as follows:
$$L_0=\{\pm 2f(d) \ | \ d\in G\};$$
$$L_g=\{\pm f(d)\pm f(d') \ | \ (d,d')\in \biguplus_{i=1}^t D_i\times D_i; \ d-d'=g\}\quad\mbox{for $g\neq0$}$$
with all possible choices of the signs.
We notice two things: all elements of these lists are units in view of the properties of $Y$; each $L_g$ is closed under
taking opposites ($h$ and $-h$ have the same multiplicity in $L_g$ for every pair $(g,h)\in G\times H$).
Thus we can write $L_g=\{1,-1\}\cdot L'_g$ where each $L'_g$ is a list of $2\lambda$ units of $R$. 

For every $h\in H$, let us denote by $\varepsilon_h$ the endomorphism of $(H,+)$ which is the multiplication 
by $h$ in the ring $R$. Take a complete set $S$ of representatives for the pairs of the patterned starter\footnote{The {\it patterned starter} of an additive group $H$ of odd order is the set of all possible pairs
$\{h,-h\}$ of opposite elements of $H\setminus\{0\}$ (see, e.g., \cite{D}).}
of $(H,+)$ and consider the set $E=\{\varepsilon_s \ | \ s\in S\}$. We have:
$$\biguplus_{\varepsilon\in E}\varepsilon(L_g)=\biguplus_{s\in S}\{s,-s\}\cdot L'_g=(H\setminus\{0\})\cdot L'_g= \,^{2\lambda}(H\setminus\{0\})$$
the last equality being true since every element of $L'_g$ is a unit and $|L'_g|=2\lambda$. Keeping the same notation used in Theorem \ref{fund}
we conclude that 
$${\cal F}:=\{\widehat\varepsilon_s(B_i) \ | \ 1\leq i\leq t; s\in S\}$$
is a $(G\times H,G\times\{0\}, \, ^n(2K),2\lambda)$-DF.
Now note that we have:
$$\biguplus_{s\in S}\widehat\varepsilon_s(B_i)=\biguplus_{s\in S}\biguplus_{d\in D_i}\{d\}\times\{sf(d),-sf(d)\}=$$
$$\biguplus_{d\in D_i}\{d\}\times\biguplus_{s\in S}\{s,-s\}\cdot f(d)=\biguplus_{d\in D_i}\{d\}\times(H\setminus\{0\})=D_i\times(H\setminus\{0\}).$$
Recalling that the $D_i$'s partition $G$ since $\cal D$ is partitioned, we conclude that the blocks of $\cal F$ partition  
$G\times(H\setminus\{0\})=(G\times H)\setminus(G\times\{0\})$. Hence $\cal F$ is a PDF relative to $G\times\{0\}$.

In order to ``complete" $\cal F$ to an ordinary PDF in $G\times H$ we need a set of blocks partitioning
$G\times\{0\}$ whose list of differences gives $2\lambda$ times $(G\setminus\{0\})\times \{0\}$. Such a set is trivially given either by the singleton
$\{G\times\{0\}\}$ or by $\{D_i\times\{0\} \ | \ 1\leq i\leq t\}$.
 We conclude that $$\{\widehat\varepsilon_s(B_i) \ | \ 1\leq i\leq t; s\in S\} \ \cup \ \{G\times\{0\}\}$$
 is a $(2\lambda(2n+1), \,^n(2K) \ \uplus \ \{2\lambda\}, \ 2\lambda)$-PDF in $G\times H$ and that 
 $$\{\widehat\varepsilon_s(B_i) \ | \ 1\leq i\leq t; s\in S\} \ \cup \ \{D_i\times\{0\} \ | \ 1\leq i\leq t\}$$
  is a $(2\lambda(2n+1),\,^n(2K) \ \uplus \ K, \ 2\lambda)$-PDF in $G\times H$.\end{proof}

As a corollary we get an infinite class of PDFs applying the above theorem with the use of the
Hadamard PDFs of Proposition \ref{D,G-D}.

\begin{corollary}\label{cor1}
If $D$ is $(4u^2,2u^2-u,u^2-u)$-HDS in $G$ and the maximal prime power divisors of $2n+1$ are all greater than $4u^2+2u$, then
there exists a $$(4u^2(2n+1),\ [^n(4u^2-2u),\,^1(4u^2),\,^n(4u^2+2u)], \ 4u^2]\mbox{-PDF}$$
and a 
$$(4u^2(2n+1),\ [^n(4u^2-2u),\,^1(2u^2-u),\, ^1(2u^2+u), \ ^n(4u^2+2u)], \ 4u^2]\mbox{-PDF}.$$
\end{corollary}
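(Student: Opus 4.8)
\emph{Proof plan.} The plan is to deduce the Corollary directly from Theorem \ref{HPDF}, the only real work being the construction of a suitable ring $R$ together with its set of units $Y$. Begin with the Hadamard PDF ${\cal D}=\{D,G\setminus D\}$ provided by Proposition \ref{D,G-D}: here $\lambda=2u^2$, $K=[2u^2-u,\,2u^2+u]$, and the maximum block size is $K_{\max}=2u^2+u$. Note that plugging $\lambda=2u^2$ and $2K=[4u^2-2u,\,4u^2+2u]$ into the two output families of Theorem \ref{HPDF} reproduces, with index $2\lambda=4u^2$, exactly the two families asserted in the statement; so once the hypotheses of Theorem \ref{HPDF} are verified for a suitable $R$ and $Y$, the Corollary follows.

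To build $R$, write $2n+1=\prod_{i=1}^{k}p_i^{a_i}$ for the factorization into prime powers and set $R=\prod_{i=1}^{k}\mathbb{F}_{q_i}$ with $q_i:=p_i^{a_i}$, a commutative ring with identity of order $2n+1$ whose units are precisely the tuples having every coordinate nonzero. The hypothesis says $q_i>4u^2+2u$, so (each $q_i$ being odd) $q_i\ge 4u^2+2u+1$ and hence $(q_i-1)/2\ge 2u^2+u$. In each $\mathbb{F}_{q_i}$ one can therefore select $2u^2+u$ of the $(q_i-1)/2$ patterned–starter pairs $\{h,-h\}$ and pick one element from each, obtaining a set $S_i\subset\mathbb{F}_{q_i}^{*}$ with $|S_i|=2u^2+u$ and $S_i\cap(-S_i)=\emptyset$; thus $S_i\cup(-S_i)$ is a $(4u^2+2u)$-subset of $\mathbb{F}_{q_i}^{*}$. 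Fixing for each $i$ an enumeration $S_i=\{s_{i,1},\dots,s_{i,2u^2+u}\}$ and putting $y_j=(s_{1,j},\dots,s_{k,j})$, define $Y=\{y_1,\dots,y_{2u^2+u}\}\subset R$.

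Next I would check the two bullet conditions of Theorem \ref{HPDF} for this $Y$. Its size is $K_{\max}=2u^2+u$ (the $y_j$ are pairwise distinct already in the first coordinate), and each $y_j$ is a unit since every coordinate $s_{i,j}$ lies in $\mathbb{F}_{q_i}^{*}$. For each $i$, the $2(2u^2+u)$ elements $\pm s_{i,j}$ are pairwise distinct — they are exactly the elements of $S_i\cup(-S_i)$, using $\mathrm{char}\,\mathbb{F}_{q_i}\ne 2$ together with $S_i\cap(-S_i)=\emptyset$. Consequently any two distinct members of $Y\cup(-Y)$ differ in \emph{every} coordinate, so each element of $\Delta(Y\cup(-Y))$ has all coordinates nonzero and is therefore a unit of $R$. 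Thus the hypotheses of Theorem \ref{HPDF} hold, and its conclusion, after the parameter substitution above, is precisely the pair of PDFs in the statement.

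The step I expect to be the crux is the choice of $R$. One cannot simply take $R=\mathbb{Z}_{2n+1}$: if $2n+1$ has a repeated prime factor $p$, then the non-units of $\mathbb{Z}_{2n+1}$ fill up whole residue classes modulo $p$, and a pigeonhole argument shows that no set $Y$ of size $2u^2+u$ can have all of $\Delta(Y\cup(-Y))$ composed of units once $p$ is small. Passing to $\prod\mathbb{F}_{p_i^{a_i}}$ is exactly what converts ``being a unit'' into the coordinatewise condition ``being nonzero'', after which the hypothesis on the maximal prime power divisors is nothing but the inequality $(q_i-1)/2\ge 2u^2+u$ that lets $2u^2+u$ disjoint starter pairs fit in each coordinate. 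Everything remaining is routine bookkeeping with the parameters.
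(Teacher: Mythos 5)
Your proposal is correct and follows essentially the same route as the paper: apply Theorem \ref{HPDF} to the Hadamard PDF $\{D,G\setminus D\}$ of Proposition \ref{D,G-D} with $R=\prod_i\mathbb{F}_{q_i}$ over the maximal prime power divisors of $2n+1$, choosing $Y$ so that in each coordinate the set $Y\cup(-Y)$ projects onto $4u^2+2u$ distinct field elements. The only (immaterial) difference is that the paper takes the concrete choice $Y=\{(\rho_1^j,\dots,\rho_t^j)\mid 1\le j\le 2u^2+u\}$ with $\rho_i$ a primitive root of $\mathbb{F}_{q_i}$, which is a special case of your selection of representatives of $2u^2+u$ patterned-starter pairs in each coordinate.
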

\begin{proof}
Let $q_1,\dots, q_t$ be the maximal prime power divisors of $2n+1$, assume that each $q_i$ is greater than $4u^2+2u$,
and let $\rho_i$ be a primitive root of $ \mathbb{F}_{q_i}$. Then the assertion will follow by applying Theorem \ref{HPDF} with $\cal D$ the Hadamard PDF 
of Proposition \ref{D,G-D}, with $R$ the ring $\mathbb{F}_{q_1}\times\dots\times \mathbb{F}_{q_t}$, and with $Y=\{(\rho_1^j,\dots,\rho_t^j) \ | \ 1\leq j\leq2u^2+u\}$.
\end{proof}

Applying the above corollary with $u=1$, namely using the trivial $(4,1,0)$-HDS, one obtains a cyclic
$(8n+4, [^n2,\,^14,\,^n6], \ 4)$-PDF whenever $2n+1$ is coprime with 15. 

It is known that there exists a $(16,6,2)$-HDS in every group $G$ of order 16
except $G=\Z_{16}$. Then Corollary \ref{cor1} gives a
$(400, \ [^{12}12, \,^116, \,^{12}20], 16)$-PDF in $G\times \Z_5\times\Z_5$ for any $G$ as above. Note, however, that 
Theorem \ref{HPDF} cannot produce a PDF with the same parameters in $G\times\Z_{25}$ 
since a set $Y$ of units of $\Z_{25}$ such that $\Delta(Y \ \cup \ -Y)\subset U(\Z_{25})$ has size at most 3.

We get another {\it sporadic} class of PDFs applying our main theorem with the use of the
$(32,[^22,6,22],16)$-PDF  given at the end of the introduction.

\begin{corollary}
If the maximal prime power divisors of $2n+1$ are all greater than $44$, then
there exists a $$(64n+32,\ [\,^{2n}4,\, ^n12, \,^132,\,^n44], \ 32)\mbox{-PDF}$$ and a $$(64n+32,\ [^22,\,^{2n}4,\, ^16, \,^n 12, \,^122,\,^n44], \ 32)\mbox{-PDF}.$$
\end{corollary}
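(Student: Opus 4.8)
The plan is to derive this corollary as a direct application of Theorem~\ref{HPDF} to the sporadic Hadamard PDF $\mathcal D = \{X_1, X_2, X_3, X_4\}$ in the non-abelian group of order $32$ exhibited at the end of the introduction, which is a $(32, [^22, 6, 22], 16)$-PDF. Here $\lambda = 16$, so $2\lambda = 32 = |G|$ confirms the Hadamard property, and the multiset of block sizes is $K = [^22, 6, 22]$, with maximum block size $K_{\max} = 22$. Theorem~\ref{HPDF} then demands a ring $R$ with identity of order $2n+1$ possessing a set $Y$ of units with $|Y| = K_{\max} = 22$ such that every element of $\Delta(Y \cup -Y)$ is a unit of $R$.

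First I would construct the ring $R$ exactly as in the proof of Corollary~\ref{cor1}: write $2n+1 = q_1 \cdots q_t$ as the product of its maximal prime power divisors, set $R = \mathbb{F}_{q_1} \times \cdots \times \mathbb{F}_{q_t}$ (so $R$ has identity and its units are the tuples with every coordinate nonzero, by CRT), pick a primitive root $\rho_i$ of $\mathbb{F}_{q_i}$ for each $i$, and put $Y = \{(\rho_1^j, \dots, \rho_t^j) \mid 1 \le j \le 22\}$. Then $|Y| = 22$ provided each $q_i > 22$, and each element of $Y$ is a unit. Next I would check the second hypothesis: a generic nonzero element of $\Delta(Y \cup -Y)$ is $\pm\rho^a \pm \rho^b$ (coordinatewise) with $1 \le a, b \le 22$; in each coordinate $\mathbb{F}_{q_i}$ this is a sum $\rho_i^a(\pm 1 \pm \rho_i^{b-a})$, which can fail to be a unit (i.e.\ can be $0$) only if $\rho_i^{b-a} = \mp 1$, and since $\rho_i$ is primitive and $|b - a| \le 21 < 22$, the only way to get $\rho_i^{b-a} = -1 = \rho_i^{(q_i-1)/2}$ or $\rho_i^{b-a}=1$ with the signs arranged to cancel would require $q_i - 1 \le 2\cdot 21 = 42$, i.e.\ $q_i \le 43$. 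Hence if every $q_i > 43$, equivalently $q_i \ge 44$ (all $q_i$ being prime powers, $> 43$ and $> 44$ are the same condition since $44$ is not a prime power, but stating $> 44$ is safe), every element of $\Delta(Y \cup -Y)$ is a unit. This is exactly the hypothesis of the corollary.

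With both hypotheses of Theorem~\ref{HPDF} verified, the theorem yields, in $G \times H$ (a group of order $32 \cdot (2n+1) = 64n + 32$), a $(2\lambda(2n+1), \ ^n(2K) \uplus \{2\lambda\}, \ 2\lambda)$-PDF and a $(2\lambda(2n+1), \ ^n(2K) \uplus K, \ 2\lambda)$-PDF. Substituting $\lambda = 16$, $2\lambda = 32$, $K = [^22, 6, 22]$ and hence $2K = [^24, 12, 44]$, the first becomes a $(64n+32, \ [^{2n}4, \ ^n12, \ ^132, \ ^n44], \ 32)$-PDF and the second a $(64n+32, \ [^22, \ ^{2n}4, \ ^16, \ ^n12, \ ^122, \ ^n44], \ 32)$-PDF, which are precisely the two families claimed. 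It only remains to note that the block-size multiset $^n(2K)$ of a PDF of Hadamard-type doubles correctly: $^n(2K) = ^n[^24,12,44] = [^{2n}4, ^n12, ^n44]$, and then $\uplus\{32\}$ or $\uplus[^22,6,22]$ inserts the remaining blocks.

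The only real point requiring care is the unit check in the second hypothesis, so I would present that computation explicitly rather than by reference. The subtlety is that $\Delta(Y \cup -Y)$ contains, besides differences of the shape $\rho^a - \rho^b$, also elements like $\rho^a - (-\rho^b) = \rho^a + \rho^b$ and $(-\rho^a) - (-\rho^b) = \rho^b - \rho^a$; in coordinate $i$ all of these have the form $\rho_i^a \cdot (\text{$1\pm\rho_i^c$})$ or $\rho_i^a\cdot(\text{$-1\pm\rho_i^c$})$ with $|c| = |a-b| \le 21$, and vanishing forces $\rho_i^c = \pm 1$ with $|c| \le 21$ and $c \ne 0$ (the case $c = 0$ giving $\pm 2$, which is a unit since $q_i$ is odd $> 2$). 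As $\rho_i$ has order $q_i - 1$, this needs $q_i - 1 \le 42$; the stated bound $q_i > 44$ comfortably excludes it. Everything else is a mechanical rewriting of the conclusion of Theorem~\ref{HPDF} with the specific numerical parameters of the sporadic PDF, so no further obstacle arises.
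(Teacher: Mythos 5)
Your proposal is correct and follows exactly the route the paper intends: apply Theorem~\ref{HPDF} to the sporadic $(32,[^22,6,22],16)$-PDF with the ring $\mathbb{F}_{q_1}\times\dots\times\mathbb{F}_{q_t}$ and $Y$ built from powers of primitive roots as in the proof of Corollary~\ref{cor1} (the paper leaves this instance to the reader). Your explicit verification that $q_i>44$ (indeed $q_i>43$ suffices) guarantees $\Delta(Y\cup -Y)\subset U(R)$ is a welcome detail the paper omits, and the parameter bookkeeping matches.
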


\medskip
The first value of $n$ for which the above corollary can be applied is 23. In this way one gets a $(1504,[^{46}4,\,^{23}12,\,^132,\,^{23}44],32)$-PDF.

\section*{Acknowledgement}
This work has been performed under the auspices of the G.N.S.A.G.A. of the
C.N.R. (National Research Council) of Italy.

\end{document}